\newtheorem{theorem}{Theorem}
\newtheorem{lemma}[theorem]{Lemma}
\newtheorem{proposition}[theorem]{Proposition}
\newtheorem{question}[theorem]{Question}
\theoremstyle{definition}
\theoremstyle{remark}
\newcommand{\N}{\mathbb{N}}
\begin{document}
\newcommand{\Addresses}{
\bigskip
\footnotesize

\medskip

\noindent Tom\'a\v s~Fl\'idr, \textsc{Peterhouse, University of Cambridge, CB2 1RD, UK.}\par\noindent\nopagebreak\textit{Email address: }\texttt{tf388@cam.ac.uk}

\medskip

\noindent Maria-Romina~Ivan, \textsc{Department of Pure Mathematics and Mathematical Statistics, Centre for Mathematical Sciences, Wilberforce Road, Cambridge, CB3 0WB, UK,} and\\\textsc{Department of Mathematics, Stanford University, 450 Jane Stanford Way, CA 94304, USA.}\par\noindent\nopagebreak\textit{Email addresses: }\texttt{mri25@dpmms.cam.ac.uk, m.r.ivan@stanford.edu}}

\pagestyle{fancy}
\fancyhf{}
\fancyhead [LE, RO] {\thepage}
\fancyhead [CE] {TOM\'A\v S FL\'IDR AND MARIA-ROMINA IVAN}
\fancyhead [CO] {A COP-WIN GRAPH WITH MAXIMUM CAPTURE TIME $\omega$}
\renewcommand{\headrulewidth}{0pt}
\renewcommand{\l}{\rule{6em}{1pt}\ }
\title{\Large{\textbf{A COP-WIN GRAPH WITH MAXIMUM CAPTURE TIME $\omega$}}}
\author{TOM\'A\v S FL\'IDR AND MARIA-ROMINA IVAN}
\date{ }
\maketitle
\begin{abstract}
The game of cops and robbers is played on a fixed (finite or infinite) graph $G$. The cop chooses his starting position, then the robber chooses his. After that, they take turns and move to adjacent vertices, or stay at their current vertex, with the cop moving first. The game finishes if the cop lands on the robber's vertex. In that case we say that the cop wins, while if the robber is never caught then we say that the robber wins. The graph $G$ is called cop-win if the cop has a winning strategy. 

In this paper we construct an infinite cop-win graph in which, for any two given starting positions of the cop and the robber, we can name in advance a finite time in which the cop can capture the robber, but these finite times are not bounded above. This shows that this graph has maximum capture time (CR-ordinal) $\omega$, disproving a conjecture of Bonato, Gordinowicz and Hahn that no such graph should exist.
\end{abstract}
\section{Introduction}
Let $G$ be a fixed graph. Two players, the cop and the robber, each pick a starting vertex, with the cop picking first. Then they move alternately, with the cop moving first: at each turn the player moves to an adjacent vertex or does not move. The game is won by the cop if he lands on the robber. We say that $G$ is \textit{cop-win} if the cop has a winning strategy. Needless to say, if the graph is not connected then this game is a rather trivial robber win, so we assume from now on that all graphs are connected.

The finite cop-win graphs were characterised by Nowakowski and Winkler \cite{NW}. It is an easy exercise to see that if the graph contains a dominated vertex, say $x$, then $G$ is cop-win if and only if $G\setminus x$ is cop-win. (Here as usual we say that a vertex $y$ \textit{dominates} a vertex $x$ if the set of $x$ and all neighbors of $x$ is contained in the set of $y$ and all neighbors of $y$.) It is also easy to see that if no vertex is dominated then the robber has a winning strategy, so that $G$ is not cop-win -- on each turn, the robber moves to a vertex not adjacent to the cop. Putting these together, we see that a finite graph $G$ is cop-win if and only if it is constructible, meaning that it can be built up from the one-point graph by repeatedly adding dominated vertices. There are multiple variations of this game for finite graphs, such as allowing more than one cop to play. We direct the interested reader to the book of Bonato and Nowakowski \cite{BN} for general background and a wealth of other results in the finite case.

We now turn to infinite graphs, where the game of cops and robbers has the exact same rules as before. In this case, the situation is much more unclear. For example, an infinite ray is constructible, but it is trivially not a cop-win graph. Conversely, there exist cop-win graphs that are not constructible -- for this, and some other phenomena, see Ivan, Leader and Walters \cite{ILW}. We mention that currently, unlike the finite graphs set-up, there is no characterization for infinite cop-win graphs.

From now on, all the graphs will be assumed to be cop-win. For a finite graph, we can define \textit{the capture time} to be the length of the game, assuming both players play optimally. That means that the cop chooses his starting position as favorably as possible, and so does the robber. After that, every cop's next move will be to minimise the time he needs to capture the robber, and every robber's next move will be to escape for as long as possible. For example, if $G=P_{2n+1}$, the capture time is $n$. How can we generalise this notion for infinite graphs?

Let $G$ be a finite cop-win graph. We denote by $\eta(u,v)$ the time it will take a cop at $v$ to capture a robber at $u$, with the robber moving first. Let $\eta(v)=\sup_{u\in V(G)}\eta(u,v)$, or in words, how long the game lasts if the cop starts at $v$. Therefore, the capture time, denoted by $\eta(G)$ is $\min_{v\in V(G)}\eta(v)$. Another quantity we will be interested in is \textit{the maximum capture time}, denoted by $\rho(G)$. This is equal to $\sup_{u,v\in V(G)}\eta(u,v)$, or the maximum length the game can possibly last.

We mention that, once we go to the realm of infinite graphs, these quantities are not necessarily all finite, and so they can no longer be precisely interpreted as `times', or `number of moves'. However, all these notions generalise nicely if one works with ordinals. For the interested reader, we will briefly discuss this in Section 3. 

One of the main questions is what `times', hence ordinals, can the maximum capture time, $\rho(G)$ be? Such ordinals are known as \textit{CR-ordinals}. By considering finite paths, every finite ordinal is a CR-ordinal. Bonato, Gordinowicz and Hahn \cite{BGH} showed that all ordinals in $\{\omega\cdot i + (i + j): i, j <\omega\}\cup\{\alpha +\omega: \alpha\text{ is a limit ordinal}\}$ are CR-ordinals, and conjectured that in fact these are all of them. In particular, they conjectured that $\omega$ is not a CR-ordinal.

In this paper we disprove this conjecture by constructing a cop-win graph such that, for any given starting positions of the cop and the robber, one can name in advance a finite time in which the game terminates, but these times are unbounded. Moreover, this property is independent of whether the robber moves first, or the cop moves first. This indeed shows that the maximum capture time of this graph is $\omega$.

\section{The main result}
In this section we give an example of a cop-win graph with finite, but unbounded capture times, as discussed above. 

The intuition behind our example is that we want to be able to, on one hand, prolong the game arbitrarily, but still be able to name a time, given the players initial positions, even before they made any move. As such, no vertex should be `unpredictable', and having two quantities, one to decrease and hence insure the game will end, and one to prolong the game arbitrarily, might be desirable. It turns out that, to give the cop a chance of catching the robber, we need to introduce some `highways' for the cop to move along (these will be the two axes). Of course, the robber can also use these highways, but fortunately it will turn out that this does not help the robber as much as it helps the cop.

Based on this idea, we consider the $\N\times\N$ grid, and edges going up and left, and down and right.  Once the robber is pushed into one of the axes, which can be accessed by the cop in one move, then, by making any two vertices on an axis adjacent, he is immediately captured. 

Let $\mathcal G$ be the graph with vertex set $V=\N\times\N \setminus \{(0,0)\}$ and edge set $E$, where $\{(a,b),(c,d)\}\in E$ if and only if either $a=c=0$, or $b=d=0$, or $a<c$ and $b>d$, or $a>c$ and $b<d$. In other words, any two vertices on the $x$-axis are connected, any two vertices on the $y$-axis are connected, and, additionally, any vertex is connected to any vertex below it and to its right, as well as to any vertex above it and to its left. Note also that the graph is symmetric with respect to the reflection in the line $x=y$, i.e. swapping the axes. The picture below shows the neighbors of $(2,0)$ and $(4,4)$.
\begin{center}
\includegraphics[width=9cm]{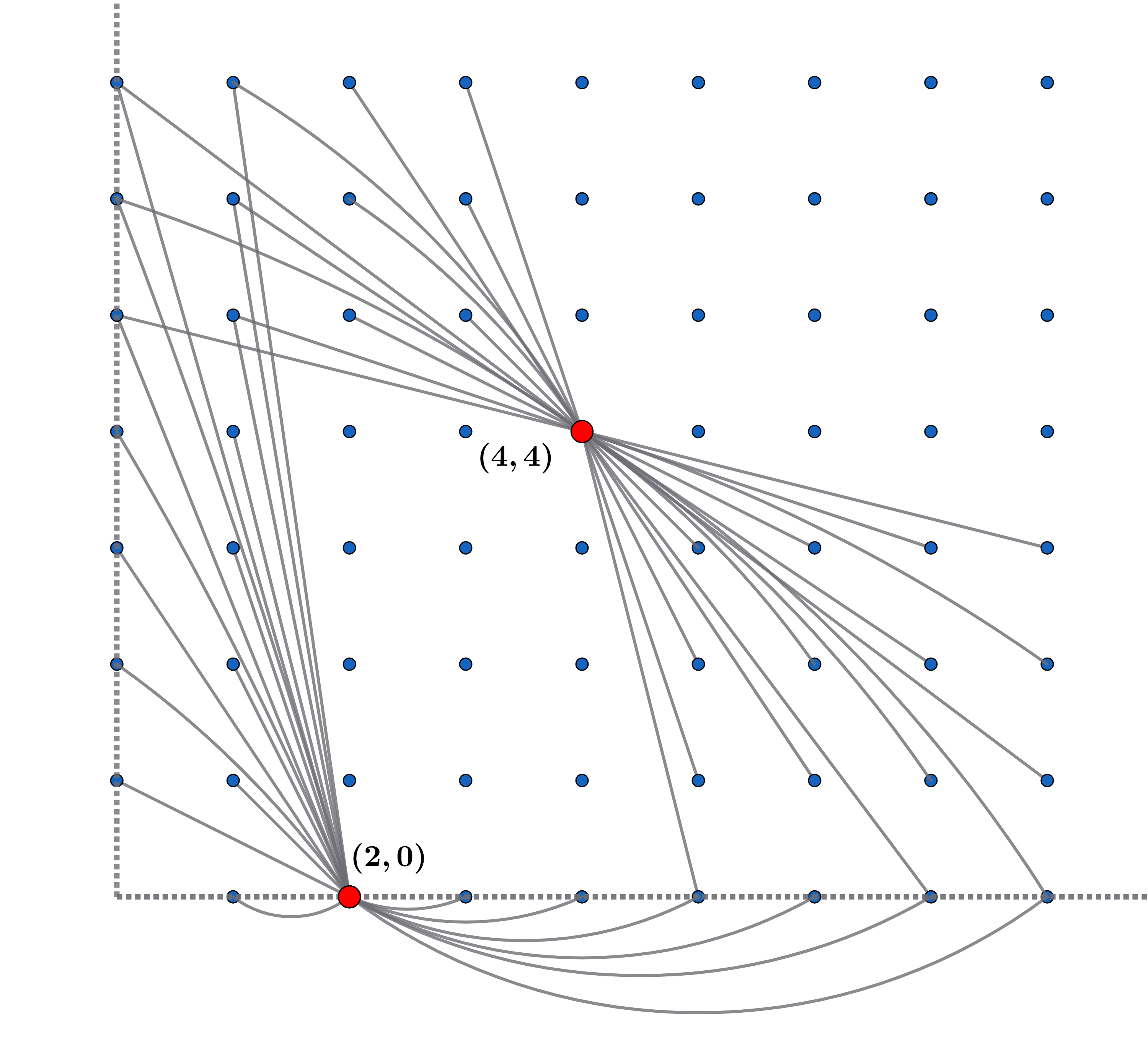}
\end{center}
\begin{theorem}
\label{main2}
The graph $\mathcal G$ is cop-win. Furthermore, for any two vertices  $u,v$ of $\mathcal G$ there exists $n\in\N$ such that if the cop starts at $v$ and robber at $u$, the cop can catch the robber in at most $n$ turns. However, there is no $n\in\N$ such that the above is true for all initial positions.
\end{theorem}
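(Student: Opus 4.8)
The plan is to prove the cop-win statement together with the existence of finite bounds by exhibiting an explicit cop strategy, and to prove the unboundedness by exhibiting a robber escape strategy governed by a potential function. Throughout I write the cop's position as $(c_1,c_2)$ and the robber's as $(r_1,r_2)$, and I use repeatedly that the neighbourhood of an \emph{interior} vertex $(a,b)$ (one with $a,b\ge 1$) consists exactly of the vertices strictly north-west of it together with the vertices strictly south-east of it, whereas a vertex on an axis is in addition joined to the whole of that axis.

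For the cop, I would begin from the observation that from \emph{any} vertex the cop can in one move land on a point $(c_1,0)$ of the $x$-axis with $c_1$ exceeding the robber's first coordinate; this uses that the $x$-axis is a clique and that every interior or $y$-axis vertex has south-east neighbours arbitrarily far to the right along the $x$-axis. If the robber sits on an axis, it is then already adjacent to such a point and is caught in $O(1)$ moves, so assume the robber occupies an interior vertex $(r_1,r_2)$. The heart of the strategy is the configuration: cop at $(c_1,0)$ with $c_1>r_1$, robber at an interior vertex, robber to move. Reading off the adjacency rules, one checks that in this position staying put, moving north-west, and stepping onto the $x$-axis all lead to immediate capture, so the robber is forced to move south-east to an interior vertex $(s,t)$ with $s\ge c_1$ and $1\le t\le r_2-1$; in particular its second coordinate strictly drops. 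The cop then slides along the $x$-axis to $(s+1,0)$, restoring the configuration. Since the robber's second coordinate is a positive integer falling by at least one each round, the robber is caught within roughly $2r_2$ moves. This shows $\mathcal{G}$ is cop-win and yields, for every pair of starting vertices, a finite bound depending only on the robber's starting second coordinate.

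For the unboundedness I would use the potential $\Phi(r_1,r_2)=\min(r_1,r_2)$ and prove: whenever the robber is at an interior vertex with $\Phi\ge 2$ and must reply to a cop move, it has a reply that is neither on nor adjacent to the cop and that lowers $\Phi$ by at most $1$. The candidate replies are to stay, to move north-west to $(r_1-1,t)$ for some $t>r_2$, and to move south-east to $(s,r_2-1)$ for some $s>r_1$; each of these lowers $\Phi$ by at most $1$. The crucial point is that the cop cannot be adjacent to (or equal to) every vertex of the second family \emph{and} every vertex of the third family at once: a short case check on the adjacency rules shows that blocking the whole second family forces the cop to lie weakly south-east of the robber (or on the $x$-axis below it), blocking the whole third family forces the cop to lie weakly north-west of the robber (or on the $y$-axis to its left), and the only vertex meeting both constraints is the robber's own vertex, which the cop does not occupy during play. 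Hence the robber always has such a reply, so if it starts at $(H,H)$ with $H$ larger than both coordinates of the cop's starting vertex, then $\Phi$ begins at $H$ and falls by at most $1$ per round, and the robber survives at least about $H$ moves; letting $H\to\infty$ makes the capture time unbounded. Both arguments are insensitive to who moves first, matching the claim in the introduction.

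The step I expect to be the main obstacle is the choice of potential in the lower bound. The obvious candidates fail: the cop genuinely can drive the robber's height $r_2$ down to $1$ in about $r_2$ moves using the $x$-axis and, symmetrically, can drive the robber's first coordinate down using the $y$-axis, so neither $r_1$ nor $r_2$ nor their sum is protected. What makes $\min(r_1,r_2)$ the right quantity is precisely that the cop cannot carry out both pushes simultaneously, and the real content of the argument is the case analysis verifying that the cop cannot block both escape families at once. The cop's strategy is comparatively routine; the only care needed there is to confirm that the forced south-east move genuinely lowers the height and can never leak onto the $y$-axis.
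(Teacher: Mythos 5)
Your proposal is correct and takes essentially the same approach as the paper: the cop pins the robber from the $x$-axis at a point to the right of him, forcing the second coordinate down each round, and the lower bound is the paper's argument made explicit via the potential $\min(r_1,r_2)$ --- the paper's escape move $(c+1,b-1)$ is exactly a member of your ``third family,'' and your observation that the cop cannot block both families at once is the content of the paper's claim that its escape move is never dominated. One small inaccuracy: a robber on the $y$-axis at $(0,b)$ is \emph{not} caught in $O(1)$ moves by a cop who heads to the $x$-axis --- he is adjacent to $(c_1,0)$ but can escape south-east to $(s,b-1)$ with $s\ge c_1$ and prolong the chase for about $b$ more rounds (the paper only claims fast capture when a coordinate is at most $1$, handling $(0,a)$ and $(1,a)$ by the symmetric $y$-axis strategy); this is harmless for the theorem, since after that one escape the robber is interior and your main loop supplies the finite bound.
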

The proof is an immediate corollary of the following theorem.
\begin{theorem}
\label{main}
The graph $\mathcal G$ is cop-win. Furthermore, for any two vertices  $u,v$ of $\mathcal G$ there exists $n\in\N$ such that if the cop is at $v$ and robber at $u$, with the robber having to make the next move, the cop can catch the robber in at most $n$ turns. However, there is no $n\in\N$ such that the above is true for all initial positions.
\end{theorem}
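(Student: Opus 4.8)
The plan is to prove the two assertions of Theorem~\ref{main} separately: exhibit a concrete cop strategy whose length is bounded in terms of $u$, and for each $m$ exhibit a pair of starting positions that forces at least $m$ cop moves.

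For the cop-win part I would use the two axes as ``launching pads''. The point is that from any interior vertex $(a,b)$ the cop can step to $(0,D)$ for every $D>b$, and from any vertex lying on an axis the cop can step to any vertex of either axis. I would then prove a pursuit lemma: if it is the cop's move, the robber sits at an interior vertex $(x,y)$, and the cop can reach the $y$-axis in one move, then the cop wins within $x$ further moves. Indeed the cop moves to $(0,D)$ with $D>y$; inspecting $N[(0,D)]$, the robber is now adjacent to the cop, and the only robber move that does not lose immediately is a step up and to the left, to an interior vertex $(x',y')$ with $x'<x$ (and, one checks, $y'\ge D$); the cop, now on the $y$-axis, repeats with $D'=y'+1$. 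So the robber's first coordinate strictly decreases each round, and the robber is caught once it is pushed to first coordinate $1$, since then every robber move lands in the cop's closed neighbourhood. By the $x\leftrightarrow y$ symmetry of $\mathcal G$ there is a mirror lemma pushing the second coordinate down. To run the whole game: after the robber's opening move to $r_1$, if $r_1$ lies on an axis the cop reaches that axis and wins in at most two more moves; otherwise $r_1=(x_1,y_1)$ is interior, and since a single robber step cannot increase both coordinates, one of $x_1,y_1$ is at most $x_0+y_0$, where $u=(x_0,y_0)$, so the cop applies the appropriate pursuit lemma and wins in at most $x_0+y_0+2$ moves. In particular the required $n$ exists, and may even be taken independent of $v$.

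For the unbounded part I would let the robber start at $(m,m)$ and the cop at $(1,1)$ (these are non-adjacent) and show the robber survives $m$ rounds. The heart of it is an escape lemma: if the robber is at an interior vertex $(x,y)$ with $\mu:=\min(x,y)\ge 2$ and the cop is at a vertex $c\ne(x,y)$, then the robber has a legal move to an interior vertex $v^\ast$ with $\min(v^\ast)\ge\mu-1$ and $v^\ast\notin N[c]$. If $c$ is not adjacent to $(x,y)$ the robber stays. If $c$ is adjacent to $(x,y)$ and the first coordinate of $c$ is less than $\mu$ (in particular whenever $c$ lies on the $y$-axis) the robber steps up and to the left to $(\mu-1,L)$ for a large $L$; symmetrically if the second coordinate of $c$ is less than $\mu$. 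In the only remaining case $c=(a,b)$ is interior, lies up-left or down-right of $(x,y)$, and has $a,b\ge\mu$; if $c$ is up-left of $(x,y)$ this forces $y=\mu$, and the robber steps up and to the left \emph{into the cop's own column}, to $(a,y+1)$ (or $(a,y+2)$ if $b=y+1$), which is not in $N[c]$ since it shares the cop's first coordinate, and has minimum coordinate $\ge y=\mu$; the down-right case is the mirror image. Granting the lemma, the robber maintains the invariant that after its $j$-th move it occupies an interior vertex with minimum coordinate at least $m-(j-1)$ that lies outside the cop's current closed neighbourhood, so the cop cannot capture on its $j$-th move; this is sustainable as long as that minimum is at least $2$, i.e. for $j$ up to $m$. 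Hence capture from $\bigl((m,m),(1,1)\bigr)$ needs more than $m$ moves, so no single $n$ works for all initial positions.

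I expect the delicate point to be the final case of the escape lemma, where the cop is interior with both coordinates at least $\mu$: there the ``flee to a huge coordinate'' moves are useless and one must instead retreat into the cop's own row or column, which is possible exactly because in that configuration one of the robber's coordinates equals $\mu$. Everything else is a careful but routine verification against the adjacency rule defining $\mathcal G$, together with the bookkeeping for the capture-time bound.
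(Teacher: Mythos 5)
Your proposal is correct and follows essentially the same strategy as the paper: the cop repeatedly occupies a far axis vertex on the far side of the robber, forcing one of the robber's coordinates to decrease strictly each round (you use the $y$-axis where the paper uses the $x$-axis, which is the same by the graph's symmetry), and the robber survives from $(m,m)$ by escape moves that cost at most one unit of one coordinate per turn. Your escape lemma is slightly more case-heavy than the paper's single uniform move to $(c+1,b-1)$, but the underlying invariant and conclusion are identical.
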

\begin{proof}
We first show that $\mathcal G$ is a cop-win graph. 

We start by noticing that if the robber moves at a vertex of the form $(a,0)$ or $(a,1)$ for some $a\geq 1$, then he is captured in at most two moves. Indeed, from any vertex, the cop can move to $(c,0)$ for some $c>a$. Then, the cop's vertex is adjacent to the robber's vertex. If the robber is at $(a,0)$ or $(a,1)$, he can either move to a vertex on the $x$-axis, and so he is immediately captured, or up and to the left. But then he is also immediately captured as any such vertex is adjacent to $(c,0)$. By symmetry, the same is true if the robber is at $(0,a)$ or $(1,a)$ for some $a\geq1$.

Let $(a,b)$ be the robber's vertex, and $(c,d)$ the cop's vertex, and it is the cop's turn to make a move. If $a=0$ or $b=0$, we already showed that the robber is caught in at most two moves, so we may assume that $a>0$ and $b>0$. Next, the cop moves to $(a+c+1,0)$. If the robber does not want to be captured, he must go down and to the right. We now noticed that with one move we have forced the robber to strictly decrease his $y$-coordinate. Continuing like this, he will reach the $x$-axis in at most $b$ steps, where he will be captured in at most two steps. This shows that $\mathcal G$ is cop-win.

Next, suppose that the robber is at $(a,b)$ and the cop is at $(c,d)$, and it is the robber's turn to make a move. We will show that he will be captured in at most $\max\{a,b\}+3$ moves. Suppose that the robber moves at $(a',b')$ (which could be equal to $(a,b)$). If $a'=0$ or $b'=0$, then he is captured in at most 2 moves, so we may assume that $a'>0$ and $b'>0$. Since $(a,b)$ is adjacent to $(a',b')$ (or equal), we may assume without loss of generality that $b'\leq b$. Now, the cop moves to $(a'+c+1,0)$. Notice that we showed above that in this situation, the robber will be captured in at most $b'\leq b$ moves, which finishes the claim.

Lastly, we have to show that for any $n\geq1$, there exist starting positions for the cop and the robber, with the robber moving first, such that the game lasts at least $n$ steps. Let the cop be at $(c,d)$ and then the robber at $(a,b)$ such that $a,b\geq n+1$. If $(a,b)$ and $(c,d)$ are not adjacent, then the robber just stays at $(a,b)$. If they are adjacent, then, we may assume without loss of generality that $a<c$ and $b>d$, and the robber moves to $(c+1,b-1)$, which is not adjacent to any vertex $(c,d)$, the cop's vertex, is adjacent to. Continuing like this, every step, the robber decrease one coordinate by at most 1, and not get captured until he reaches one of the axes, or $(a_0,1)$ or $(1,b_0)$ for some $a_0,b_0\geq 1$. Hence he can survive for at least $n$ moves, finishing the proof.
\end{proof}
\section{Generalised capture time}

In the case where the capture time is finite, we can interpret it as `length of the game'. However, in general, using ordinals for capture times is very natural. We make this precise here. 

Let $G$ be a graph of cardinality $\aleph_{\beta}$, where $\beta\geq 0$ is an ordinal. For any vertex $v\in V(G)$ we denote by $N[v]$ its closed neighborhood (its neighbors, together with $v$). Set $\omega(G)=\omega_{\beta+1}$, and define the relations $\{\leq\alpha\}_{\alpha<\omega(G)}$ on $V(G)$ as follows. If $u = v$, then $u\leq_0 v$. Also, $u\leq_{\alpha} v$ if for all $x\in N[u]$, there exists $y\in N[v]$ such that $x\leq_{\gamma} y$ for some $\gamma<\alpha$.

Now, for any two vertices $u, v$, we define $\eta(u, v)=\alpha $, where $\alpha$ is the minimum ordinal for which $u\leq_\alpha v$ holds. We observe that if $\eta(u, v)$ is finite, then it is precisely equal to the time it takes the cop at $v$ to catch the robber at $u$, assuming both play optimally, and the robber moves first. Define $\eta(v)$ to be the minimum ordinal $\alpha$ such that $u\leq_{\alpha} v$ holds for all $u\in V(G)$. Finally, define $\eta(G) = \min_{v\in V(G)}\eta(v)$. One can check that if $\eta(G)$ is finite, then it is precisely the capture time of the cop-win graph $G$. Let also $\rho(G) =\sup_{v\in V(G)}\eta(v)$. In the finite case, $\rho(G)$ is the maximum capture time over all initial positions of the cop. 

We make a few observations. Suppose $G$ is a cop-win graph such that $\eta(u,v)$ is always finite, but $\sup_{u,v\in V(G)}\eta(u,v)=\omega$, then, by definition, $\rho(G)=\omega$. But $\eta(u,v)$ being finite, say $n$, means that a cop at $v$ will \textit{always} capture a robber at $u$ in at most $n$ moves, with the robber moving first. We have showed in Theorem~\ref{main} that our graph $\mathcal G$ is cop-win, and, for any initial positions of the cop and the robber (with the robber moving first), one can name a finite capture time, \textit{before} any player made any move, but these times are unbounded over all starting positions. This gives us the following.

\begin{theorem}
Let $\mathcal G$ be the graph constructed above. Then $\rho(G)=\omega$. In other words, $\omega$ is a CR-ordinal.
\end{theorem}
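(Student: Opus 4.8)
The plan is to deduce this immediately from Theorem~\ref{main} together with the definitions set up in this section. First I would record the trivial monotonicity fact that $u\leq_\alpha v$ and $\alpha\leq\alpha'$ imply $u\leq_{\alpha'}v$: this is immediate from the definition, since if $\gamma<\alpha$ witnesses $u\leq_\alpha v$ then $\gamma<\alpha'$ as well, and the case $u=v$ is handled by taking $y=x$ and $\gamma=0$. With monotonicity in hand, the key link is the following claim, proved by induction on $n\in\N$: $u\leq_n v$ holds if and only if a cop at $v$ can guarantee capturing a robber at $u$ within $n$ moves, the robber moving first. The base case $n=0$ is just $u=v$, i.e. the robber is already caught. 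For the inductive step, $u\leq_{n+1}v$ unwinds (using monotonicity to replace "$\gamma<n+1$'' by "$\gamma\leq n$'') to the statement that for every robber move $x\in N[u]$ there is a cop move $y\in N[v]$ with $x\leq_n y$, which by the inductive hypothesis says exactly that the cop has a reply after which he can finish within $n$ further moves.

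Now I apply Theorem~\ref{main}. For any $u,v\in V(\mathcal G)$ it supplies a finite $n=n(u,v)$ such that a cop at $v$ catches a robber at $u$ (robber first) within $n$ moves, so by the claim $u\leq_n v$ and hence $\eta(u,v)\leq n<\omega$. Thus every $\eta(u,v)$ is finite, so $\eta(v)=\sup_{u\in V(\mathcal G)}\eta(u,v)\leq\omega$ for each $v$ (in particular $\eta(v)<\omega(\mathcal G)$, so $\eta(v)$ and $\rho(\mathcal G)$ are genuinely defined), and therefore
\[
\rho(\mathcal G)=\sup_{v\in V(\mathcal G)}\eta(v)=\sup_{u,v\in V(\mathcal G)}\eta(u,v)\leq\omega.
\]
For the reverse inequality, the final part of Theorem~\ref{main} gives, for every $n\geq 1$, a choice of starting positions (robber first) in which the robber survives at least $n$ moves; by the claim this forces $\eta(u,v)\geq n$ for that pair, so $\sup_{u,v}\eta(u,v)\geq n$ for all $n$, i.e. $\rho(\mathcal G)\geq\omega$. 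Combining the two bounds yields $\rho(\mathcal G)=\omega$, and hence $\omega$ is a CR-ordinal.

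There is no real obstacle here — once Theorem~\ref{main} is available, the statement is pure bookkeeping with the ordinal relations. The one point worth spelling out is the equivalence between $\leq_n$ and finite game lengths: the discussion above records only the forward direction (if $\eta(u,v)$ is finite it equals the capture time), whereas to conclude that $\eta(u,v)$ is finite in the first place one needs the converse implication, which is precisely the short induction carried out in the first paragraph.
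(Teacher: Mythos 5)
Your proposal is correct and follows essentially the same route as the paper, which likewise deduces the theorem directly from Theorem~\ref{main} via the correspondence between the relations $\leq_n$ and guaranteed capture within $n$ moves (robber moving first). The only difference is one of detail: the paper states that correspondence as an unproved observation, whereas you supply the monotonicity lemma and the induction that justify it, including the converse direction actually needed to conclude that each $\eta(u,v)$ is finite.
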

\section{Further remarks}
Surprisingly, we also have the following.
\begin{proposition}
The graph $\mathcal G$ is constructible.
\end{proposition}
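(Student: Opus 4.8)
The plan is to construct an explicit transfinite enumeration of the vertices of $\mathcal G$ witnessing constructibility. Recall that (for possibly infinite graphs) $\mathcal G$ is constructible if its vertex set can be well-ordered as $(v_\alpha)_{\alpha<\mu}$ so that for every $\alpha\ge 1$ the vertex $v_\alpha$ is dominated in the subgraph of $\mathcal G$ induced by $W:=\{v_\beta:\beta\le\alpha\}$; limit stages of the construction contribute nothing, being unions. So the only thing to check at stage $\alpha\ge 1$ is that there is an already-enumerated vertex $w$ with $N_{\mathcal G}[v_\alpha]\cap W\subseteq N_{\mathcal G}[w]\cap W$.

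I would enumerate the vertices in three consecutive blocks: first the $x$-axis as $(1,0),(2,0),(3,0),\dots$; then the $y$-axis as $(0,1),(0,2),(0,3),\dots$; and finally the remaining vertices column by column from left to right, say $(1,1),(1,2),(1,3),\dots,(2,1),(2,2),\dots,(3,1),\dots$ (the order within a column turns out to be immaterial); the resulting order type is $\omega^2$, but any ordinal is permitted. The two axis blocks are easy. When $(a,0)$ with $a\ge 2$ is added, the vertices present are exactly $(1,0),\dots,(a,0)$, which form a clique, so $(a,0)$ is dominated by $(1,0)$. When $(0,b)$ is added, the vertices present are the whole $x$-axis together with $(0,1),\dots,(0,b)$, and a direct check shows that both $(0,b)$ and $(1,0)$ are adjacent to every other vertex currently present, so $(0,b)$ is again dominated by $(1,0)$.

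The substance is the third block, where I claim that each non-axis vertex $(a,b)$ (with $a,b\ge 1$) is dominated, at the moment it is added, by $(a+1,0)$, which lies in the first block and so is already present. Here $N_{\mathcal G}[(a,b)]$ consists of $(a,b)$ together with all vertices strictly below and to the right of it and all vertices strictly above and to the left of it, while $N_{\mathcal G}[(a+1,0)]$ is the whole $x$-axis together with every vertex $(c,d)$ with $0\le c\le a$ and $d\ge 1$. One then checks: the $x$-axis neighbours of $(a,b)$ lie in $N_{\mathcal G}[(a+1,0)]$ trivially; the $y$-axis neighbours of $(a,b)$ and its non-axis neighbours above-and-left all have first coordinate $\le a$ and second coordinate $\ge 1$, hence also lie there; and the only remaining neighbours of $(a,b)$ are non-axis vertices with first coordinate $\ge a+1$, none of which has been enumerated yet by the left-to-right column ordering, so after intersecting with the set of present vertices they contribute nothing. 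This yields the required inclusion. The main obstacle is precisely this last point: a vertex $(a,b)$ with $a,b\ge 2$ is \emph{not} dominated in $\mathcal G$ itself, so the enumeration must be arranged so that the troublesome neighbours of $(a,b)$ — the non-axis vertices to its lower right — are all introduced only after $(a,b)$, which is exactly what the column-by-column order achieves. With the three blocks verified, the transfinite construction goes through and $\mathcal G$ is constructible.
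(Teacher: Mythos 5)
Your proof is correct, but it follows a genuinely different enumeration from the paper's. The paper builds $\mathcal G$ in order type $\omega$ by sweeping along the anti-diagonals $a+b=n$: after seeding with $(1,0)$ and $(0,1)$ it adds each diagonal from one axis to the other, each new vertex being dominated by a suitable previously added vertex (e.g.\ $(1,1)$ by $(0,2)$). You instead exhaust the $x$-axis, then the $y$-axis, then the interior column by column, giving order type $\omega^2$, and you exploit the single uniform dominating vertex $(a+1,0)$ for the whole of column $a$ --- which works precisely because the troublesome lower-right neighbours of $(a,b)$ all live in later columns, and vertices within a column are non-adjacent, so the order inside a column does not matter. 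Both routes are valid: the general definition of constructibility allows arbitrary ordinal length, so your transfinite enumeration is legitimate, and your verification of the key inclusion $N[(a,b)]\cap W\subseteq N[(a+1,0)]$ is complete and correct. What the paper's route buys is a construction of length $\omega$ (no limit stages), the most economical possible for a countably infinite graph; what yours buys is a fully checked domination argument with one explicit dominating vertex per column, whereas the paper's two-line sketch leaves the general inductive step to the reader (and contains a typo, `$(2,0)$ being dominated by $(2,0)$', where $(0,2)$ is meant).
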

\begin{proof}
We start with $(1,0)$, then add $(0,1)$, then $(0,2)$ being dominated by $(1,0)$, then $(1,1)$ being dominated by $(0,2)$, then $(2,0)$, being dominated by $(2,0)$. We continue like this by moving along the axes once and then on the diagonal with slope $-1$ until we reach the other axis.
\end{proof}
We end with a few observation about cop-win graphs with CR-ordinal $\omega$ such that $\eta(u,v)\neq\omega$ for any two vertices $u$ and $v$. We call such graphs \textit{limit} graphs.
\begin{lemma}
Let $G$ be a limit graph with infinite diameter. Then all vertices of $G$ have infinite degree.
\end{lemma}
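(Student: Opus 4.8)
The plan is to first pin down the capture times $\eta(v)$ for all vertices, and then derive a contradiction from the existence of a vertex of finite degree.

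\emph{Step 1: $\eta(v)=\omega$ for every vertex $v$.} I would start from the elementary remark that infinite diameter forces every vertex to have infinite eccentricity: if for each $n$ there are vertices $u_n,w_n$ with $d(u_n,w_n)\ge n$, then $d(u_n,v)+d(v,w_n)\ge d(u_n,w_n)\ge n$ shows $\sup_x d(x,v)=\infty$ for every $v$. Now fix $v$ and let the robber sit forever at a vertex $u$ with $d(u,v)$ large; since the cop (starting at $v$) changes his position by at most one vertex per move, he needs at least $d(u,v)$ moves to reach $u$, so $\eta(u,v)\ge d(u,v)$ and hence $\eta(v)=\sup_u\eta(u,v)\ge\omega$. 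As $G$ is a limit graph, $\rho(G)=\omega$, and since $\eta(v)\le\rho(G)$ we get $\eta(v)=\omega$ for every $v$. Moreover, all the values $\eta(u,v)$ being finite (again because $G$ is a limit graph), this supremum is never attained: there is no worst robber position against a cop placed at a fixed vertex.

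\emph{Step 2: setting up the contradiction.} Suppose some vertex $v_0$ has finite degree, and write $N[v_0]=\{v_0,v_1,\dots,v_k\}$. Unwinding the recursion defining $\eta$ — a cop at $v_0$ answers the robber's move by moving to one of the finitely many vertices $v_i$ — gives
\[
\eta(v_0)=1+\sup_{x\in V}\ \min_{0\le i\le k}\eta(x,v_i).
\]
So, writing $m(x)=\min_{0\le i\le k}\eta(x,v_i)$, the finite numbers $m(x)$ are unbounded over $x\in V$. On the other hand $\eta(x,v_0)=1+\sup_{y\in N[x]}m(y)$ for every $x$, and this is finite, so $m$ is bounded on every closed neighbourhood. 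The aim is to use this local boundedness together with infinite diameter to exhibit a pair $u,v$ with $\eta(u,v)\ge\omega$, i.e.\ a configuration from which the robber is never captured — directly contradicting the limit-graph hypothesis.

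\emph{Step 3: building an escaping play — the hard part.} I would track optimal plays in which the cop starts at $v_0$: his first move lies in the finite set $N[v_0]$, so by the pigeonhole principle a single vertex $c_1\in N[v_0]$ is the cop's optimal reply in arbitrarily long plays; iterating, and using that $m$ cannot jump (being bounded on neighbourhoods) while vertices of arbitrarily large $m$-value exist far apart, one tries to splice a sequence of ever-longer finite robber escapes into a single infinite escape. The obstruction I expect to be the real content: the robber's own moves branch into possibly infinite neighbourhoods at vertices other than $v_0$, so König's lemma does not apply directly, and the finiteness of $\deg v_0$ and the infinite diameter have to be combined — presumably through the constraint $\eta(\cdot\,,v_0)<\omega$, equivalently the local boundedness of $m$ — to bound how far the relevant positions can drift and thereby assemble a genuine never-caught play.
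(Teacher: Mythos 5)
Your Steps 1 and 2 are fine as far as they go, but Step 3 is where the proof has to happen, and there you have only described the expected difficulties rather than resolved them; as it stands this is not a proof. Moreover, the target you set yourself in Step 3 --- assembling ``a genuine never-caught play'' --- is unattainable: $G$ is cop-win, so against optimal cop play no infinite escaping play exists. The condition $\eta(u,v)\ge\omega$ says only that from the starting pair $(u,v)$ the robber can survive arbitrarily long, not that he survives forever, so any splicing or K\"onig-type argument aimed at producing one infinite escape is doomed from the start. What you need (and all you need) is a single fixed pair of starting positions from which, for every $n$, the robber has a strategy surviving $n$ moves.

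The idea that closes the gap is already implicit in your Step 1, except that the pigeonhole should be applied on the robber's side to geodesics, not to the cop's optimal replies. Since $G$ has infinite diameter, every vertex has infinite eccentricity (your own remark), so there are shortest paths from the finite-degree vertex $v_0$ of unbounded length. Each such path leaves $v_0$ through one of its finitely many neighbours, so some fixed neighbour $y$ is the second vertex of shortest paths from $v_0$ of unbounded length. Put the cop at $v_0$ and the robber at $y$. Given $n$, the robber walks along a shortest path from $v_0$ of length at least $n$ that passes through $y$: after his $k$-th move he is at distance $k+1$ from $v_0$, while the cop, having made at most $k$ moves from $v_0$, is at distance at most $k$ from $v_0$, so capture is impossible during those $n-1$ moves. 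Hence $\eta(y,v_0)\ge n$ for every $n$, i.e.\ $\eta(y,v_0)$ is not finite, contradicting the limit-graph hypothesis. This is exactly the paper's argument; it needs neither the recursion for $\eta$ nor the local boundedness of your function $m$.
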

\begin{proof}
Assume for a contradiction that $x\in V(G)$ is a vertex with finite degree. Since $G$ has infinite diameter, the lengths of the shortest paths from $x$ to the rest of the vertices of $G$ is unbounded. Since $x$ has a finite number of neighbors, there exists $y$ adjacent to $x$ such that infinitely many of these paths that start at $x$ must go through $y$ next.

Now assume that the cop is at $x$ and the robber is at $y$. For any $n\in\N$, the robber can follow a shortest path from $x$ of length at least $n$ and hence survive for at least $n$ turns. Therefore, the cop cannot guarantee to capture the robber in $n$ moves for any $n\in\N$, a contradiction.
\end{proof}

\begin{lemma}
Let $G$ be a limit graph, and $s\in V(G)$ with degree at least 2. Then the cop's winning strategy cannot have the property that, given his position and regardless of the robber's position, he commits to a shortest path to $s$ to follow, until he either intercepts the robber, or reaches $s$.
\end{lemma}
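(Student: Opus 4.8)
The plan is to argue by contradiction, exploiting the fact that such a strategy is \emph{oblivious} to the robber. Since the cop commits to (and then follows) a shortest path to $s$ depending only on his own position, his entire future trajectory is determined the instant he chooses his starting vertex; the robber, who may assume knowledge of the cop's strategy, can compute this trajectory in advance and simply stay out of its way.

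Concretely, suppose the cop has a winning strategy $\sigma$ with the stated property, and let $v$ be the starting vertex $\sigma$ prescribes. By the property, $\sigma$ makes the cop commit to some shortest path $P=(v=p_0,p_1,\dots,p_k=s)$ from $v$ to $s$ and walk along it, so that after $t$ rounds the cop sits at $p_t$ for $0\le t\le k$. The step that needs care — and the main obstacle — is what happens once the cop arrives at $s$, since the hypothesis only pins down his moves up to that moment: the commitment ends, so the property applies afresh from the position $s$, forcing the cop to commit to a shortest path from $s$ to itself, which can only be the trivial length-$0$ path. Hence the cop stays at $s$, and iterating this he remains at $s$ at every round $t\ge k$. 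Therefore the set $S$ of vertices the cop ever occupies is contained in $\{p_0,\dots,p_k\}$, a finite set of size at most $d(v,s)+1$.

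Now $\mathcal G$ is a limit graph, so $\rho(\mathcal G)=\omega$ while every $\eta(u,w)$ is finite; in particular the capture times are unbounded, which forces $\mathcal G$ to be infinite. So we may pick a vertex $z\in V(\mathcal G)\setminus S$, and consider the robber who, facing the cop at $v$, starts at $z$ and never moves. This is a legal play, and after every round the cop occupies a vertex of $S$ while the robber sits permanently at $z\notin S$, so the cop never lands on the robber and the robber evades capture forever. This contradicts the assumption that $\sigma$ is winning, so no winning strategy can have the stated property. (The hypothesis $\deg(s)\ge 2$ is available and presumably convenient in the surrounding discussion, though the oblivious-trajectory argument above does not appear to need it — a leaf $s$ would corner the cop even more directly.)
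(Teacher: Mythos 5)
Your argument hinges on the claim that once the cop reaches $s$ he is forced to remain there forever, so that his entire trajectory is confined to the finite vertex set of a single shortest path. That does not follow from the hypothesis, and it is not the intended reading of it. The stated property only constrains the cop \emph{until} he first intercepts the robber or reaches $s$; once he is at $s$ the commitment is discharged (a shortest path from $s$ to $s$ is the empty path, so the ``until he reaches $s$'' condition is already met), and the strategy is free to do anything from then on. In particular, a stationary robber at your vertex $z$ is not safe: after $k$ rounds the cop sits at $s$ and may then pursue $z$ with an arbitrary winning continuation, so your contradiction evaporates. A tell-tale sign of the misreading is your closing remark that the hypothesis $\deg(s)\ge 2$ appears unnecessary --- under the correct reading it is used essentially, to place the cop at a neighbor of $s$ other than the one through which the robber's escape routes pass.

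The genuine content of the lemma is that the \emph{predictability} of the cop's approach to $s$ lets the robber buy an unbounded head start, contradicting the finiteness of every $\eta(u,v)$ in a limit graph. The paper's proof does this in two cases. If $G$ has infinite diameter, it starts the robber at $s$ and the cop at a suitably chosen neighbor of $s$ (this is where $\deg(s)\ge 2$ enters), and lets the robber run out along arbitrarily long shortest paths from $s$ while the cop is obliged to step into $s$; the survival time from those fixed starting positions is then unbounded, so $\eta$ is not finite there. If $G$ has finite diameter, it places the cop at a vertex $x$ at maximal distance from $s$ and shows, via a parity and middle-vertex analysis of the cop's committed path, that the robber can reach vertices of arbitrarily large capture time from $s$ before the cop arrives at $s$, again forcing unbounded capture times from fixed starting positions. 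Any correct proof must engage with what the cop does \emph{after} reaching $s$; yours does not.
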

\begin{proof}
Suppose first that $G$ has infinite diameter. Then, for every $n\in\N$ there exists $x_n\in V(G)$ such that $d(s,x_n)\geq n$. Now, for all $n\in\N$ fix such a $x_n$ and a shortest path from $s$ to $x_n$, say $P_n$. Let also $y_1$ be the first vertex after $s$ in $P_1$. If infinitely many $P_n$ go through $y_1$, then put the cop at some other neighbor of $s$, and the robber at $s$. Then the robber can choose any long path going through $y_1$ to follow and survive for arbitrarily long time. If only finitely many of these paths go through $y_1$, then put the cop at $y_1$ and the robber at $s$. The robber can now choose all the paths that do not go thought $y_1$ and again, survive for an arbitrary long time, a contradiction.

Therefore $G$ has finite diameter. Assume for a contradiction that the capture times (for a cop) from $s$ to the rest of the vertices are arbitrarily large. Let $x$ be a vertex such that $d(s,x)$ is maximal, and $x_n$ a vertex such that the capture time for a cop at $s$ and a robber at $x_n$ is at least $n$.

Suppose first that $d(s,x)$ is odd. We put the cop at $x$ and the robber at $s$. Then, for any $n\in\N$ the robber can move to a vertex with capture time from $s$ at least $n$, and stay there until the cop reaches $s$. This contradicts the assumption that the capture time from $x$ to $s$ is finite. Note that no matter what shortest path to $x_n$ the robber might have chosen, and no matter what shortest path to $s$ the cop might have chosen, they could not have intersected as the distance from $s$ to $x$ is odd.

Assume now that $d(x,s)$ is even, say $2k$, and the cop is at $x$. Let $P$ be the shortest path from $x$ to $s$ that the cop commits to. If the robber is at $s$ and tries to get to $x_n$, he can only be intercepted if his path goes through the middle of $P$. If this does not happen for infinitely many $n$, then the robber can still get to infinitely many $x_n$, a contradiction. Therefore, for infinitely many $n$, the shortest paths from $s$ to $x_n$ go through the middle vertex of $P$, call it $y$. In this case, we put the robber at $y$, who can now get to infinitely many $x_n$ before the cop gets to $s$ via $P$, a contradiction.

Therefore, there exists $n_0$ such that a cop at $s$ will catch the robber anywhere in the graph in at most $n_0$ steps. Since $G$ has finite diameter, this implies that the maximum capture time is also finite, a contradiction.
\end{proof}

Notice that we have found a cop-win graph with maximal capture time $\omega$, however $\omega$ is not attained by any $\eta(u,v)$. It is therefore natural to ask the following.
\begin{question}
Does there exist a cop-win graph $G$ such that $\rho(G)=\omega$, and there exists $u,v\in V(G)$ such that $\eta(u,v)=\omega$? In other words, does there exist a cop-win graph such that for any two vertices one of the following is true, with the second case happening at least once: either we can name a finite end time in advance, or we cannot, in which case, for any first move the robber can make, the cop can make a move such that we can now name a finite end time for the new positions?  
\end{question}

\noindent{\textbf{Acknowledgment.} The first author would like to thank G-Research for generously funding his stay in Cambridge while undertaking this project.}

\Addresses
\end{document}